\pgfplotsset{compat=newest}
\newtheorem{theorem}{Theorem}[section]
\theoremstyle{definition}
\newtheorem{example}[theorem]{Example}
\theoremstyle{remark}
\newtheorem{remark}[theorem]{Remark}
\newtheorem{assumption}[theorem]{Assumption}
\numberwithin{equation}{section}
\newcommand{\R}{\mathbb{R}}
\title{On the optimal linear convergence factor of the relaxed proximal point algorithm for monotone inclusion problems}
\author{%
Guoyong Gu%
\thanks{Department of Mathematics, Nanjing University, Nanjing, 210093, China.}
\thanks{Email: ggu@nju.edu.cn. This author was supported by the NSFC grant 11671195.}
\and
Junfeng Yang\footnotemark[1]
\thanks{Email: jfyang@nju.edu.cn. This author was supported by the NSFC grant  11771208.}
}
\date{}
\begin{document}
\maketitle

\begin{abstract}
  Finding a zero of a maximal monotone operator is fundamental in convex optimization and monotone operator theory, and \emph{proximal point algorithm} (PPA) is a primary method for solving this problem. PPA converges not only globally under fairly mild conditions but also asymptotically at a fast linear rate provided that the underlying inverse operator is Lipschitz continuous at the origin. These nice convergence properties are preserved by a relaxed variant of PPA. Recently, a linear convergence bound was established in
[M. Tao, and X. M. Yuan, J. Sci. Comput., 74 (2018), pp. 826-850] for the relaxed PPA, and it was shown that the bound is optimal when the relaxation factor $\gamma$ lies in $[1,2)$. However, for other choices of $\gamma$, the bound obtained by Tao and Yuan is suboptimal. In this paper, we establish tight linear convergence bounds for any choice of $\gamma\in(0,2)$ and make the whole picture about optimal linear convergence bounds clear. These results sharpen our understandings to the asymptotic behavior of the relaxed PPA.

\bigskip

\noindent\textbf{Keywords:}
proximal point algorithm,
maximal monotone operator inclusion,
Lipschitz continuous,
linear convergence rate,
optimal convergence factor
\end{abstract}

\section{Introduction}
\label{sc:intro}

The \emph{proximal point algorithm} (PPA) was pioneered by Moreau \cite{Mor65bsmf,Mor62} and  Martinet \cite{Mar70,Mart72}.
It was popularized in the optimization community by Rockafellar \cite{Roc76a,Roc76b}, mainly due to its global convergence under fairly mild conditions, fast asymptotic linear/superlinear convergence rate under certain regularity conditions, as well as its connections to a few classical optimization schemes. Ever since, PPA has been playing tremendously important roles in designing, analyzing and understanding of optimization algorithms. In this section,  we first briefly review PPA and then summarize the contributions and the organization of this paper.
Since the literature of  PPA has become so vast, a thorough overview is far beyond the focus of this paper.  Instead, we will keep our review short and succinct, mainly focusing on the closely related  theoretical works.

It was shown in \cite{Roc76b} that the landmark method of multipliers of Hestenes \cite{Hes69} and Powell \cite{Pow69} for constrained nonlinear optimization is a dual application of the PPA. The Douglas-Rachford operator splitting method \cite{DR56,LM79} is also an application of the PPA to a special splitting operator \cite{EB92}. Given the connection with the Douglas-Rachford splitting method revealed in \cite{Gabay83}, the influential alternating direction method of multipliers for linearly constrained separable convex optimization \cite{GM75,GM76} is also an application of the PPA.
In the general setting of maximally monotone inclusion problems, it was shown in \cite{Roc76a} that PPA, as well as some approximate variants of it, converge globally as long as the sequence of proximal parameters is bounded away from $0$ and the underlying problem admits a solution.
Recently, in the absence of regularity assumption, a nonasymptotic $O(1/N)$ convergence rate has been derived in \cite{HY15c}  for the Douglas-Rachford splitting method, a generalization of PPA to treating the sum of two operators, where $N$ denotes the iteration counter.
For strongly monotone operators, it was shown in \cite{Gabay83,EY14SIOPT} that PPA converges linearly when the sequence of proximal parameters keeps constant. In fact, asymptotically linear convergence rate is preserved by PPA as long as the inverse operator is Lipschitz continuous at the origin, a condition weaker than strong monotonicity, see \cite{Roc76a}. Recently, the linear convergence results in \cite{Roc76a} were generalized in \cite{TY18JSC} under the same regularity condition to a relaxed variant of PPA considered in \cite[Remark 2.3]{Gabay83}. Note that the relaxed PPA is also called generalized PPA in \cite{EB92}.
For minimizing a proper lower semicontinuous convex function and structured convex optimization, convergence analysis related to PPA is even more abundant. For example, a nonasymptotic $O(1/N)$ sublinear convergence rate measured by function value residual has been established in \cite{Gul91sicon}. Sublinear, as well as linear, convergence rates were derived in \cite{DavY17mor} under various regularity assumptions for several splitting methods that are closely related to PPA in the context of linearly constrained separable convex optimization. See also \cite{Gul92siopt,BT09} for some accelerated proximal-point-like methods designed for solving convex optimization problems by using Nesterov type acceleration technique \cite{Nest83}.

\subsection{Contributions}
In this paper, we further investigate the optimal linear convergence rate of relaxed PPA under the same regularity condition as in \cite{Roc76a,TY18JSC}, i.e., the inverse operator is Lipschitz continuous at the origin. When the relaxation parameter $\gamma$ lies in $[1,2)$,
the linear convergence factor obtained in \cite[Theorem 3.5]{TY18JSC} is tight. However, for other choices of $\gamma$, the results in \cite[Theorem 3.5]{TY18JSC} is suboptimal. The main contribution of this paper is to establish optimal linear convergence bounds for any choice of $\gamma\in(0,2)$, making the whole picture clear.

\subsection{Notation and organization}
In this paper, we reside ourselves in the $n$-dimensional Euclidean space $\R^n$, with inner product denoted by $\langle\cdot,\cdot\rangle$ and the induced norm $\|\cdot\| = \sqrt{\langle\cdot,\cdot\rangle}$, though all the analysis can be easily extended to any finite dimensional real Euclidean spaces.   Let $T: \R^n \rightrightarrows \R^n$ be a maximal monotone operator and $c>0$ be a scalar. The resolvent operator of $T$ is given by $J_{cT} := (I + cT)^{-1}$. The set of zeros of $T$ is denoted by $\text{zer}(T) := \{z^* \in \R^n \mid 0 \in T(z^*)\}$.

The rest of this paper is organized as follows. In Section \ref{sc:RPPA}, we specify the relaxed PPA and make our assumptions.
Linear convergence bounds are derived in Section \ref{sc:linear_factor}, followed by examples to show that the established bounds are nonimprovable in Section \ref{sc:bounds_optimal}. Finally, some concluding remarks are given in Section \ref{sc:conclusions}.

\section{Relaxed PPA}\label{sc:RPPA}
Let $T: \R^n \rightrightarrows \R^n$ be a set-valued maximal monotone operator. A problem of fundamental importance in convex analysis and convex optimization is to find a zero of $T$, i.e., find $z^*\in \R^n$ such that $0\in T(z^*)$.
Let $\gamma \in (0,2)$ and $\{c_k>0: k = 0, 1, 2, \ldots\}$ be a sequence of parameters.
Initialized at $z^0\in\R^n$, the relaxed (or generalized) PPA generates a unique sequence of points $\{z^k: k = 1, 2, 3, \ldots\}$ via
\begin{subequations}\label{PPA-scheme}
\begin{align}\label{PPA-1}
  \tilde{z}^k & := J_{c_kT}(z^k), \smallskip \\
\label{PPA-2}
   z^{k+1} & := (1-\gamma)z^k+\gamma \tilde{z}^k, \; \; k = 0, 1, 2, \ldots
\end{align}
\end{subequations}
See, e.g., \cite[Remark 2.3]{Gabay83} for an early reference about this relaxed PPA scheme.
It was shown in \cite{Minty62} that the resolvent operator of any maximal monotone operator is single-valued and everywhere defined.
Therefore, the scheme \eqref{PPA-scheme} is well defined.  Note that the original PPA corresponds to $\gamma\equiv 1$ in \eqref{PPA-scheme}.
Numerically, the introduction of the relaxation parameter $\gamma\in (0,2)$ can usually accelerate the original PPA, see, e.g.,
\cite[Section 2.3.1]{Bert82book} and \cite{FHLY15MPC} for numerical evidence, while theoretically, given the connections between PPA and the many classical algorithms reviewed in Section \ref{sc:intro}, introducing the relaxation parameter $\gamma$ usually inspires new algorithms. One particular example so inspired is the generalized alternating direction method of multipliers derived in \cite{EB92}.

Suppose that $\text{zer}(T)\neq \emptyset$ and $\{c_k>0: k=0,1,2,\ldots\}$ is bounded away from $0$.
It was first shown by Rockafellar \cite[Theorem 1]{Roc76a}  that the sequence $\{z^k: k=1,2,\ldots\}$ generated by the original PPA, or some well controlled approximate variants,  converges to some $z^*\in \text{zer}(T)$. If, in addition, $T^{-1}$ is Lipschitz continuous at $0$ with modulus $a\geq 0$, then the convergence rate is guaranteed to be linear eventually, where the linear convergence factor depends on the modulus $a$, among some others. Recently, these results were generalized in \cite{TY18JSC} to the relaxed PPA scheme \eqref{PPA-scheme} and its approximate variants.
In this paper, we carry out some further analysis following this line. For this purpose, we make the following assumptions.

\begin{assumption}\label{assumption-ck}
Assume that the sequence $\{c_k>0: k=0,1,2,\ldots\}$ is bounded away from $0$.
\end{assumption}
\begin{assumption}\label{assumption-Lip}
Assume that $T^{-1}$ is Lipschitz at $0$ with modulus $a\geq 0$, i.e., (i) $\text{zer}(T) = \{z^*\}$ is a singleton, where $z^*\in\R^n$, and (ii) for some $\tau >0$ it holds that $\|z - z^*\| \leq a \|w\|$ whenever $z \in T^{-1}(w)$ and $\|w\| \leq \tau$.
\end{assumption}
\begin{assumption}\label{assumption-Lip-k}
$k>0$ is an integer such that $\|(z^k-\tilde{z}^k)/c_k\| \leq \tau$ is satisfied, where $\tau >0$ is given in Assumption \ref{assumption-Lip}.
\end{assumption}

\begin{remark}
Note that $c_k$'s are algorithmic parameters and can be fully controlled. Assumption \ref{assumption-Lip} is a regularity assumption on $T$, which is weaker than strong monotonicity.
It follows from \cite[Theorem 3.2]{TY18JSC} that $\lim_{k\rightarrow\infty}\|z^k - \tilde{z}^k\| = 0$ provided that Assumption \ref{assumption-ck} is satisfied. As a result, Assumption \ref{assumption-Lip-k} can be fulfilled for sufficiently large $k$. In particular, if $c_k\equiv c>0$, then according to \cite[Theorem 3.1]{HY15c} it suffices to have Assumption \ref{assumption-Lip-k} satisfied whenever $k \geq {\hat k} := \frac{\|z^0-z^*\|^2}{\gamma(2-\gamma)\tau^2 c^2}$. The focus of this work is the optimal linear convergence factor at the $k$-th iteration, where $k$ is such that Assumption \ref{assumption-Lip-k} is fulfilled.
\end{remark}

Under Assumptions \ref{assumption-ck}-\ref{assumption-Lip-k}, it was shown in \cite[Theorem 3.5]{TY18JSC} that
\begin{equation}\label{TY-rate}
\|z^{k+1} - z^*\|^2 \leq \Bigl(1 - \min(\gamma,2\gamma-\gamma^2){c_k^2 \over a^2 + c_k^2}\Bigr)   \|z^{k+1} - z^*\|^2.
\end{equation}
In the case of $\gamma\in[1,2)$, it is also certificated in \cite{TY18JSC}  by an example that the bound \eqref{TY-rate} is tight.
We show in this paper that the bound \eqref{TY-rate} is suboptimal when $\gamma\in(0,1)$, and a tighter bound is then derived in this case, which is guaranteed to be optimal. Therefore, the whole picture about the optimal linear convergence bound of the relaxed PPA \eqref{PPA-scheme} is made clear.

\section{Linear convergence factor}\label{sc:linear_factor}
In this section, we establish linear convergence factor of the relaxed PPA \eqref{PPA-scheme} at the $k$-th iteration, where $k>0$ is sufficient large such that Assumption \ref{assumption-Lip-k} is satisfied.
It follows from \eqref{PPA-1} that  $\tilde{z}^k \in T^{-1}((z^k-\tilde{z}^k)/c_k)$. Therefore, Assumption \ref{assumption-Lip} implies that
\begin{equation}\label{Lip-invT-0}
\|\tilde{z}^k - z^*\| \leq a\|(z^k-\tilde{z}^k)/c_k\| = t_k  \|z^k-\tilde{z}^k\|,
\end{equation}
where $t_k := a/c_k$. Furthermore, it follows from  $    z^k-\tilde{z}^k \in c_k T\tilde{z}^k$, $0 \in c_k  Tz^*$ and the monotonicity of $T$ that
\begin{equation}\label{monotone-1}
   \langle z^k - \tilde{z}^k, \tilde{z}^k - z^* \rangle\geq 0.
\end{equation}
For simplicity, we denote
\begin{align}\label{def:minus-star}
  u^k := z^k - z^*, \text{~~}  u^{k+1} := z^{k+1} - z^* \text{~~and~~}   \tilde{u}^k := \tilde{z}^k - z^*.
\end{align}
Then,  \eqref{Lip-invT-0} and \eqref{monotone-1}  appear, respectively, as
\begin{equation}\label{mono-Lip-u}
 \|\tilde{u}^k\| \leq t_k  \|u^k-\tilde{u}^k\| \text{~~and~~}    \langle u^k - \tilde{u}^k, \tilde{u}^k \rangle\geq 0.
\end{equation}
Furthermore, \eqref{PPA-2} reads
\begin{equation}\label{relaxed-u}
u^{k+1}= (1-\gamma)u^k+\gamma \tilde{u}^k.
\end{equation}
For convenience, we define
\begin{eqnarray}\label{def:pars}
\left\{
\begin{array}{l}
  \mu_k := \frac{2\gamma(t_k^2+\gamma-1)}{t_k^2+1}, \;\;
  \nu_k := \frac{\gamma(2-\gamma)}{t_k^2+1}, \;\;
  \lambda_k    :=  \frac{\gamma(1-\gamma)}{t_k} + \frac{\gamma^2}{t_k+1}, \smallskip \\
  \varrho_{k,u} := 1-\frac{\gamma(2-\gamma)}{t_k^2+1}, \;\;
  \varrho_{k,l}  := \Bigl(1-\frac{\gamma}{t_k+1}\Bigr)^2  \text{~~and~~} \varrho_k := \max(\varrho_{k,u},\varrho_{k,l}).
\end{array}
\right.
\end{eqnarray}
It is easy to verify that
\begin{align*}
  \varrho_{k,u} - \varrho_{k,l}
 = \frac{2\gamma}{t_k+1} - \frac{\gamma^2}{(t_k+1)^2} - \frac{\gamma(2-\gamma)}{t_k^2+1} 
= \frac{2\gamma t_k (t_k^2 + \gamma - 1)}{(t_k+1)^2(t_k^2+1)},
\end{align*}
and thus
\begin{equation}\label{rhok}
\varrho_k = \max(\varrho_{k,u},\varrho_{k,l}) =
\left\{
  \begin{array}{ll}
    \varrho_{k,u}, & \hbox{if $t_k^2 + \gamma \geq 1$,} \smallskip \\
    \varrho_{k,l}, & \hbox{if otherwise.}
  \end{array}
\right.
\end{equation}
%
Following is our main convergence result on the linear convergence factor.
\begin{theorem}
  \label{thm:main}
Let $T: \R^n \rightrightarrows \R^n$ be a maximal monotone operator and $\gamma\in (0,2)$. Assume that $\{c_k: k=0,1,2,\ldots\}$ satisfies Assumption \ref{assumption-ck} and $T$ satisfies the regularity assumption \ref{assumption-Lip}.
Let $k>0$ be such that Assumption \ref{assumption-Lip-k} is satisfied.   Then, it holds that
\begin{equation}\label{thm-inequality}
\|u^{k+1}\|^2 \leq \varrho_k \|u^k\|^2.
\end{equation}
\end{theorem}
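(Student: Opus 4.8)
The plan is to reduce everything to the three scalar quantities $\|u^k\|^2$, $\langle\tilde u^k,u^k-\tilde u^k\rangle$ and $\|u^k-\tilde u^k\|^2$, and to exhibit an \emph{exact} quadratic identity for $\|u^{k+1}\|^2$ in which the two hypotheses \eqref{mono-Lip-u} enter only through manifestly nonnegative slack terms. Writing $d^k:=u^k-\tilde u^k$, the relaxation \eqref{relaxed-u} reads $u^{k+1}=u^k-\gamma d^k$, so expanding the square and using $\langle u^k,d^k\rangle=\langle\tilde u^k,d^k\rangle+\|d^k\|^2$ yields the intermediate identity
\[
\|u^{k+1}\|^2=\|u^k\|^2-2\gamma\langle\tilde u^k,d^k\rangle-\gamma(2-\gamma)\|d^k\|^2 .
\]
I would then eliminate $\|d^k\|^2$ in favour of $\|u^k\|^2$ using the Lipschitz bound. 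Introducing the slacks
\[
g_1:=t_k^2\|d^k\|^2-\|\tilde u^k\|^2\ge 0,\qquad g_2:=\langle\tilde u^k,u^k-\tilde u^k\rangle\ge 0
\]
(the first by squaring the left relation in \eqref{mono-Lip-u}, the second being its right relation), the elementary identity $\|u^k\|^2=\|\tilde u^k\|^2+2g_2+\|d^k\|^2$ gives $(t_k^2+1)\|d^k\|^2=\|u^k\|^2+g_1-2g_2$. Substituting this and collecting terms with the abbreviations \eqref{def:pars} should produce the master identity
\[
\|u^{k+1}\|^2=\varrho_{k,u}\|u^k\|^2-\nu_k g_1-\mu_k g_2 .\qquad(\ast)
\]

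In the case $t_k^2+\gamma\ge 1$ we have $\varrho_k=\varrho_{k,u}$ by \eqref{rhok}, and from \eqref{def:pars} the multiplier $\mu_k\ge 0$, while $\nu_k>0$ for every $\gamma\in(0,2)$. Since $g_1,g_2\ge 0$, both correction terms in $(\ast)$ are nonnegative, whence $\|u^{k+1}\|^2\le\varrho_{k,u}\|u^k\|^2=\varrho_k\|u^k\|^2$ and this case is finished immediately.

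The difficult case, and the crux of the argument, is $t_k^2+\gamma<1$ (which forces $\gamma<1$ and $t_k<1$), where $\varrho_k=\varrho_{k,l}>\varrho_{k,u}$ and $\mu_k<0$: now the monotonicity term $-\mu_k g_2$ in $(\ast)$ has the \emph{wrong} sign and cannot be dropped. Monotonicity by itself is too weak here, since equality in the target bound is attained when $u^k$ and $\tilde u^k$ are collinear with the Lipschitz relation tight, a configuration in which monotonicity is slack. I would therefore discard $g_2$ and instead bring in Cauchy--Schwarz (the automatic relation $\langle u^k,\tilde u^k\rangle\le\|u^k\|\,\|\tilde u^k\|$). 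Concretely, I aim to verify the companion identity
\[
\varrho_{k,l}\|u^k\|^2-\|u^{k+1}\|^2=\lambda_k g_1+\frac{b^2}{(1+t_k)^2}\bigl\|(1+t_k)\tilde u^k-t_k u^k\bigr\|^2,
\]
where $\lambda_k$ is the quantity in \eqref{def:pars} and $b^2:=\lambda_k(1-t_k^2)-\gamma^2$. The coefficient $\lambda_k$ is precisely the one that makes the $g_2$ contribution assemble into the displayed perfect square (equivalently, it is the Cauchy--Schwarz multiplier pinned down by the collinear extremal). Both summands on the right are nonnegative---$\lambda_k>0$ since $\gamma<1$, and $b^2\ge 0$ is to be confirmed from the definitions using $t_k<1$---so $\|u^{k+1}\|^2\le\varrho_{k,l}\|u^k\|^2=\varrho_k\|u^k\|^2$ follows.

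The main obstacle is thus the second case: checking that the coefficients conspire so that $\varrho_{k,l}\|u^k\|^2-\|u^{k+1}\|^2-\lambda_k g_1$ is a nonnegative multiple of a single perfect square (in particular that $b^2\ge 0$). This algebraic coincidence is exactly what underlies the improvement of $\varrho_{k,l}$ over $\varrho_{k,u}$ when $t_k^2+\gamma<1$, and it is where the sharper bound, rather than the suboptimal one of \eqref{TY-rate}, comes from.
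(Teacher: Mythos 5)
Your proposal is correct and is essentially the paper's own constructive argument: in the case $t_k^2+\gamma\ge 1$ your master identity $(\ast)$ uses exactly the multipliers $\mu_k,\nu_k$ of \eqref{def:C}, and in the case $t_k^2+\gamma<1$ your multiplier $\lambda_k$ and perfect square coincide with the paper's, since $b^2=\lambda_k(1-t_k^2)-\gamma^2=\gamma(1-\gamma-t_k^2)/t_k>0$ there and $\frac{b^2}{(1+t_k)^2}\bigl\|(1+t_k)\tilde u^k-t_k u^k\bigr\|^2=-\gamma(t_k^2+\gamma-1)\bigl\|\frac{\sqrt{t_k}}{t_k+1}u^k-\frac{1}{\sqrt{t_k}}\tilde u^k\bigr\|^2$ is precisely the paper's final square term. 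Both of your claimed identities check out coefficient-by-coefficient in $\|u^k\|^2$, $\langle u^k,\tilde u^k\rangle$, $\|\tilde u^k\|^2$; the only difference is bookkeeping, and your exact case-(ii) identity incidentally shows that the first component of \eqref{def:Cprime} should read $\gamma(t_k^2+\gamma-1)\,t_k/(t_k+1)^2$ rather than $\gamma(t_k^2+\gamma-1)\,t_k/(t_k^2+1)$, so the paper's subsequent inequality step is in fact an equality.
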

\begin{proof}
We separate the proof into two cases, (i) $t_k^2 + \gamma \geq 1$, and (ii) $t_k^2 + \gamma < 1$.

For case (i), it holds that $t_k^2 + \gamma \geq 1$. Then, it is elementary to verify that
\begin{align}\label{def:C}
\begin{pmatrix}
    C_{11}  \smallskip \\
    C_{12}  \smallskip \\
    C_{22}  \smallskip \\
\end{pmatrix}
:=
\begin{pmatrix}
    (1-\gamma)^2 + \nu_k t_k^2      \smallskip \\
    \gamma(1-\gamma) +  \mu_k/2 - t_k^2 \nu_k  \smallskip \\
    \gamma^2 - \mu_k + (t_k^2-1)\nu_k \smallskip \\
\end{pmatrix}
=
\begin{pmatrix}
    \varrho_{k,u} \smallskip \\
    0  \smallskip \\
    0  \smallskip  \\
\end{pmatrix}.
\end{align}
Since $t_k^2 + \gamma \geq 1$ and $\gamma \in (0,2)$, it is clear from \eqref{def:pars} that $\mu_k\geq 0$ and $\nu_k>0$. Furthermore,
$\nu_k \leq 1/(t_k^2+1) < 1$ and thus $0<\varrho_{k,u}<1$.
It thus follows from \eqref{mono-Lip-u},  \eqref{relaxed-u} and \eqref{def:C} that
\begin{align*} 
 \|u^{k+1}\|^2  \leq \; & \|u^{k+1}\|^2 + \mu_k \langle u^k - \tilde{u}^k, \tilde{u}^k \rangle   + \nu_k (t_k^2  \|u^k-\tilde{u}^k\|^2 - \|\tilde{u}^k\|^2) \smallskip \\
= \; & \|(1-\gamma)u^{k} + \gamma \tilde{u}^k\|^2 + \mu_k \langle u^k - \tilde{u}^k, \tilde{u}^k \rangle  + \nu_k (t_k^2  \|u^k-\tilde{u}^k\|^2 - \|\tilde{u}^k\|^2)  \smallskip \\
= \; & C_{11} \|u^k\|^2 + 2C_{12} \langle u^k, \tilde{u}^k\rangle + C_{22} \|\tilde{u}^k\|^2  \smallskip \\
= \; &  \varrho_{k,u}  \|u^k\|^2.
\end{align*}

For case (ii), it holds that $t_k^2 + \gamma < 1$. Then, it is elementary to verify that
\begin{align}\label{def:Cprime}
\begin{pmatrix}
    C_{11}'  \smallskip \\
    C_{12}'  \smallskip \\
    C_{22}'  \smallskip \\
\end{pmatrix}
:=
\begin{pmatrix}
    (1-\gamma)^2 +  \lambda_k  t_k^2 - \varrho_{k,l}   \smallskip \\
    \gamma(1-\gamma) - \lambda_k  t_k^2  \smallskip \\
    \gamma^2 + \lambda_k  (t_k^2-1)  \smallskip \\
\end{pmatrix}
=
\gamma(t_k^2 + \gamma - 1)
\begin{pmatrix}
    \frac{t_k}{t_k^2 + 1}  \smallskip \\
    {- 1 \over t_k + 1}  \smallskip \\
    {1 \over t_k}  \smallskip \\
\end{pmatrix}.
\end{align}
Since $t_k = a/c_k > 0$, $\gamma \in (0,2)$ and $t_k^2 + \gamma < 1$, it is easy to show from \eqref{def:pars}  that
$\lambda_k > 0$ and $0 < \varrho_{k,l} < 1$.
It thus follows from \eqref{mono-Lip-u}, \eqref{relaxed-u} and \eqref{def:Cprime} that
\begin{align*}
  \|u^{k+1}\|^2  \leq \;
& \|u^{k+1}\|^2   + \lambda_k  (t_k^2  \|u^k-\tilde{u}^k\|^2 - \|\tilde{u}^k\|^2)   \smallskip    \\
= \; & \|(1-\gamma)u^{k} + \gamma \tilde{u}^k\|^2  + \lambda_k  (t_k^2  \|u^k-\tilde{u}^k\|^2 - \|\tilde{u}^k\|^2)  \smallskip   \\
= \; & C_{11}' \|u^k\|^2 + 2C_{12}' \langle u^k, \tilde{u}^k\rangle + C_{22}' \|\tilde{u}^k\|^2 +   \varrho_{k,l}\|u^k\|^2  \smallskip  \\
= \; &  \gamma(t_k^2 + \gamma - 1) \Bigl( \frac{t_k}{t_k^2 + 1} \|u^k\|^2 - {2 \over t_k + 1} \langle u^k, \tilde{u}^k\rangle + {1 \over t_k} \|\tilde{u}^k\|^2  \Bigr) +   \varrho_{k,l}\|u^k\|^2  \smallskip  \\
\leq \; &  \gamma(t_k^2 + \gamma - 1) \Bigl( \frac{t_k}{(t_k + 1)^2} \|u^k\|^2 - {2 \over t_k + 1} \langle u^k, \tilde{u}^k\rangle + {1 \over t_k} \|\tilde{u}^k\|^2  \Bigr) +   \varrho_{k,l}\|u^k\|^2  \smallskip \\
= \; & \gamma(t_k^2 + \gamma - 1) \Bigl\|\frac{\sqrt{t_k}}{t_k + 1} u^k - {1 \over \sqrt{t_k}} \tilde{u}^k \Bigr\|^2 + \varrho_{k,l}\|u^k\|^2  \smallskip \\
\leq \; &  \varrho_{k,l}\|u^k\|^2,
\end{align*}
where the second and the third inequalities follow from $t_k>0$ and $t_k^2 + \gamma - 1 < 0$.

In summary, by considering the definition of $\varrho_k$ in \eqref{rhok}, we have proved \eqref{thm-inequality}.
\end{proof}

\section{The obtained bounds are optimal}\label{sc:bounds_optimal}
When $\gamma \in [1,2)$, the linear convergence bound given in \cite{TY18JSC} is tight. In this case, our bound is exactly the same as that given in  \cite{TY18JSC}. In this section, we first show that our bounds are better than those given in \cite{TY18JSC} when $\gamma \in (0,1)$ and then provide examples to show that the established bounds are always tight.

For convenience, we split the discussions into three cases, (i) $\gamma\in [1,2)$, (ii) $\gamma\in (0,1)$ and $t_k^2 + \gamma \geq 1$, and (iii) $\gamma\in (0,1)$ and $t_k^2 + \gamma < 1$. Furthermore, we denote the linear convergence bounds given in \cite{TY18JSC} by $\varrho^{\text{TY}}_k$, while the bounds established in this paper are denoted by $\varrho^{\text{GY}}_k$.
Recall that the bounds derived in  \cite{TY18JSC} are given in \eqref{TY-rate}, while ours are given in \eqref{thm-inequality}.

For case (i), the bounds are the same and are given by $\varrho^{\text{TY}}_k = \varrho^{\text{GY}}_k = 1 - \frac{\gamma(2-\gamma)}{t_k^2 + 1}$.

For case (ii), we have $\varrho^{\text{TY}}_k = 1 - \frac{\gamma}{t_k^2 + 1}$, $\varrho^{\text{GY}}_k =  1 - \frac{\gamma(2-\gamma)}{t_k^2 + 1}$ and thus
$\varrho^{\text{TY}}_k - \varrho^{\text{GY}}_k =  \frac{\gamma(1- \gamma)}{t_k^2 + 1} > 0$.

For case (iii), we have $\varrho^{\text{TY}}_k = 1 - \frac{\gamma}{t_k^2 + 1}$, $\varrho^{\text{GY}}_k  = \Bigl(1 - \frac{\gamma}{t_k + 1}\Bigr)^2$ and thus
\begin{align*}
\varrho^{\text{TY}}_k - \varrho^{\text{GY}}_k
&=  \frac{2\gamma}{t_k + 1} - \frac{\gamma^2}{(t_k + 1)^2} -\frac{\gamma}{t_k^2 + 1} \\
&=  \gamma \Bigl[ \frac{2t_k + 2 - \gamma}{(t_k + 1)^2} - \frac{1}{t_k^2 + 1} \Bigr] \\
&>  \gamma \Bigl[ 1 - \frac{1}{t_k^2 + 1} \Bigr] \\
& > 0,
\end{align*}
where the first inequality follows from $(t_k + 1)^2 = t_k^2 + 2t_k + 1 < 2t_k + 2 - \gamma$.
In summary, for cases (ii) and (iii) our bounds are always shaper than \eqref{TY-rate}, while for case (i) the bounds are identical and optimal.

Next, we provide examples to show that our bounds are always tight. In fact, Example \ref{ex1} given below for
the case $t_k^2 + \gamma \geq 1$ is the same as in \cite[Sec. 2.2]{TY18JSC}. We show that the worst-case bound $\varrho_{k,u}$ is attained by this example within the region $t_k^2 + \gamma \geq 1$, which is larger than $\gamma \in [1,2)$. Example \ref{ex2} given below for
the case $t_k^2 + \gamma < 1$  is one-dimensional. Both examples are linear.

\begin{example}\label{ex1}
Consider the case $t_k^2 + \gamma \geq 1$.
Let $a>0$ and define
\begin{equation}
T(z) = {1\over a} \begin{pmatrix}
                      0 & 1 \\
                      -1 & 0 \\
                  \end{pmatrix}
\begin{pmatrix}
    z_1 \\
    z_2 \\
\end{pmatrix}, \quad \forall \, z = \begin{pmatrix}
                       z_1 \\
                       z_2 \\
                   \end{pmatrix}\in\R^2.
\end{equation}
Clearly, $T$ is linear and single-valued everywhere.
Since the coefficient matrix defining $T$ is skew-symmetric, it holds that $\langle x - y, T(x) - T(y) \rangle = 0$ for any $x,y\in\R^2$.
In particular, $T$ is monotone. Since $T$ is also continuous, it follows that $T$ is maximally monotone.
Apparently, the unique solution of $0\in T(z)$ is $z^* := (0,0)^T$, i.e., the origin.
Furthermore, the inverse operator $T^{-1}$  of $T$ is given by
\begin{equation}
T^{-1}(z) = a \begin{pmatrix}
                      0 & -1 \\
                      1 & 0 \\
                  \end{pmatrix}
\begin{pmatrix}
    z_1 \\
    z_2 \\
\end{pmatrix}, \quad \forall \, z = \begin{pmatrix}
                       z_1 \\
                       z_2 \\
                   \end{pmatrix}\in\R^2,
\end{equation}
which is apparently Lipschitz continuous globally with modulus $a$. Let $c_k > 0$, $\tilde{z}^k = J_{c_kT}(z^k)$ and recall the notation defined in \eqref{def:minus-star}. Then, it follows from
$T \tilde{z}^k = (z^k - \tilde{z}^k)/c_k$ and $T(z^*) = 0$ that $\langle  u^k - \tilde{u}^k, \tilde{u}^k\rangle = 0$.
Furthermore, it is easy to verify that
\begin{equation}\label{ex1-ztilde}
\tilde{z}^k   =
{a\over a^2 + c_k^2}
\begin{pmatrix}
    a  & -c_k \\
    c_k & a \\
\end{pmatrix}
\begin{pmatrix}
    z^k_1 \\
    z^k_2 \\
\end{pmatrix}
\text{~~and~~}
\|\tilde{u}^k\|^2 = {a^2 \over a^2 + c_k^2} \|u^k\|^2.
\end{equation}
It follows from  \eqref{relaxed-u} that
\begin{align*}
  \|u^{k+1}\|^2
  & = \|(1-\gamma)u^k + \gamma \tilde{u}^k\|^2 \\
  & = (1-\gamma)^2 \|u^k\|^2 + 2\gamma(1-\gamma)\langle u^k, \tilde{u}^k\rangle + \gamma^2 \|\tilde{u}^k\|^2 \\
  & = (1-\gamma)^2 \|u^k\|^2 + 2\gamma(1-\gamma)\langle u^k-\tilde{u}^k, \tilde{u}^k\rangle + (2\gamma(1-\gamma)+\gamma^2) \|\tilde{u}^k\|^2 \\
  & = (1-\gamma)^2 \|u^k\|^2 + \gamma(2-\gamma)\|\tilde{u}^k\|^2 \\
  & = \Bigl((1-\gamma)^2  + \gamma(2-\gamma) \frac{a^2}{a^2 + c_k^2} \Bigr)  \|u^k\|^2 \\
  & = \varrho_{k,u}  \|u^k\|^2,
\end{align*}
where the fourth ``$=$" follows from $\langle  u^k - \tilde{u}^k, \tilde{u}^k\rangle = 0$, the fifth ``$=$" follows from \eqref{ex1-ztilde} and the last ``$=$" follows from $t_k = a/c_k$ and the definition of $\varrho_{k,u}$ in \eqref{def:pars}. As a result, the upper bound $\varrho_{k,u}$ is attained.
\end{example}

\begin{example}\label{ex2}
Consider the case $t_k^2 + \gamma < 1$.
Let $a>0$ and define $T(z) = z/a$ for $z\in \R$.
Apparently, $T$ is maximal monotone, and $T^{-1}(z) = a z$ for $z\in \R$.
Thus, $\text{zer}(T) = \{z^*\} = \{0\}$, and $T^{-1}$ is Lipschitz continuous globally with modulus $a$.
In fact, $T$ is strongly monotone with modulus $1/a$.
Let $c_k > 0$.
It is trivial to show that
\[\tilde{z}^k = J_{c_k T}(z^k) = (1 + c_k T)^{-1}(z^k) = {t_k z^k\over t_k + 1}, \quad \forall\, z^k\in\R,\]
where $t_k = a/c_k$. Therefore,  $\tilde{u}^k = {t_k u^k\over t_k + 1}$ and
\begin{align*}
\|u^{k+1}\|^2 = \|(1-\gamma)u^k + \gamma \tilde{u}^k\|^2 = \Bigl\| \Bigl(1-\gamma   +  {\gamma t_k \over t_k+1 }\Bigr) u^k \Bigr\|^2
= \varrho_{k,l} \|u^k\|^2.
\end{align*}
Here $\varrho_{k,l}$ is defined in \eqref{def:pars}. As a result, the upper bound $\varrho_{k,l}$ is attained.
\end{example}

The optimality of the established bounds are illustrated in Figure \ref{Fig:example}, in comparison with those given in \cite{TY18JSC}.
Recall that $t_k=a/c_k$.
\begin{figure}[h!]
\begin{center}
\begin{tikzpicture}[scale=1.6]
\path [fill=lightgray] (0, 1) -- (3, 1) -- (3, 2) -- (0, 2) -- (0, 1);
\draw[-latex] (0, 0) -- (0, 2.2) node [above]{$\gamma$};
\draw[-latex] (0, 0) -- (3, 0) node [right]{$({a}/{c_k})^2$};
\draw [-] (0, 1) -- (3, 1) node [right] {$\gamma=1$};
\draw [dashed] (0, 2) -- (3, 2) node [right] {$\gamma=2$};
\node [right] at (.2, 1.5) {$1-\frac{2\gamma -\gamma^2}{1+(a/c_k)^2}$, optimal};
\node [right] at (.2, 0.5) {$1-\frac{\gamma}{1+(a/c_k)^2}$, not optimal};
\node [right] at (1.2, -0.3) {  };
\end{tikzpicture}
\begin{tikzpicture}[scale=1.6]
\path [fill=lightgray] (0, 1) -- (1, 0) -- (3, 0) -- (3, 2) -- (0, 2) -- (0, 1);
\draw[-latex] (0, 0) -- (0, 2.2) node [above]{$\gamma$};
\draw[-latex] (0, 0) -- (3, 0) node [right]{$({a}/{c_k})^2$};
\draw [-] (-0.2, 1.2) -- (1.2, -0.2) node [right] {$\gamma+({a}/{c_k})^2=1$};
\draw [dashed] (0, 2) -- (3, 2) node [right] {$\gamma=2$};
\node [right] at (1, 1) {$\varrho_{k,u}$, optimal};
\node [right] at (0, 0.4) {$\varrho_{k,l}$,};
\node [right] at (0, 0.1) {optimal};
\end{tikzpicture}
\end{center}
\caption{Illustration of the bounds. Left: results in \cite{TY18JSC}. Right: results of this work.}\label{Fig:example}
\end{figure}
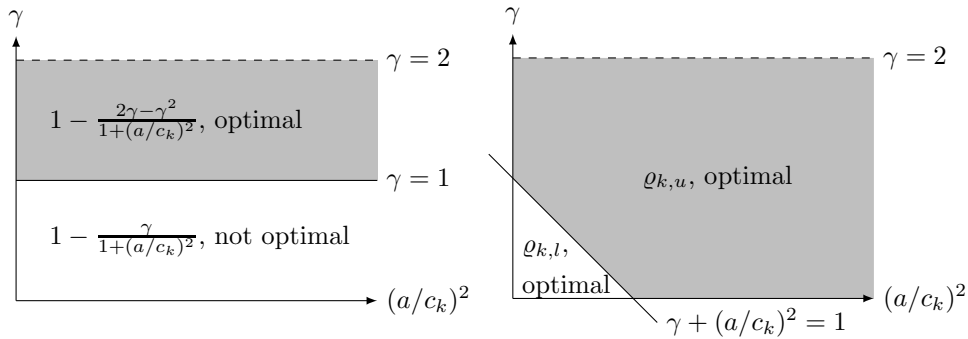

\section{Concluding remarks}\label{sc:conclusions}
In this paper, we have investigated linear convergence rate of the relaxed PPA under the regularity condition that $T^{-1}$ is Lipschitz continuous at the origin. We have established tight linear convergence bounds for all choices of $\gamma\in(0,2)$. In comparison,
the bounds given in \cite{TY18JSC} are optimal only for $\gamma\in[1,2)$.
Our proof of Theorem \ref{thm:main} is constructive, and the discovery of the separating line $t_k^2 + \gamma = 1$ makes the whole picture about optimal linear convergence bounds clear.
The monotonicity of $\{c_k: k=0,1,2,\ldots\}$, as required in \cite{Roc76a}, is in fact irrelevant here since our result on the linear convergence factor is a one-step analysis. In the case of $c_k\equiv c>0$, the bound $\varrho_k$ in \eqref{rhok} becomes a constant $\varrho\in (0,1)$. As a result, \eqref{thm-inequality} holds with $\varrho_k\equiv \varrho$ for all $k$ greater than some ${\hat k}>0$, and, furthermore, it is tight.
Finally, we point out that the extension of our proof to inexact variants of the relaxed PPA \eqref{PPA-scheme} with \eqref{PPA-1} being replaced by approximate evaluation of $J_{c_kT}$ as in \cite{Roc76a} seems within reach.


\def\cprime{$'$}

\end{document}